

\documentclass[a4paper,12pt,reqno]{article}

\usepackage{hyperref}

\usepackage{pgf,tikz}
\usetikzlibrary[patterns]
\usepackage{mathrsfs}
\usetikzlibrary{arrows}
\usepackage{xcolor}

\usepackage{fancyhdr}
\usepackage{stmaryrd}
\usepackage[scale=0.75]{geometry}
\usepackage{mathtools}
\usepackage{color}
\usepackage{url}
\usepackage{amsmath}
\usepackage{amsthm}
\usepackage{amssymb}
\usepackage{amscd}          			

\usepackage{bbm}            			
\usepackage{mathrsfs}           		

\usepackage[T1]{fontenc}       		
\usepackage[english]{babel}     		
\usepackage[utf8]{inputenc}

\usepackage{graphicx}          		


\usepackage{verbatim}           		



\theoremstyle{plain}
\newtheorem{Theorem}{Theorem}[section]

\newtheorem{Proposition}[Theorem]{Proposition}
\newtheorem{Conjecture}[Theorem]{Conjecture}

\theoremstyle{definition}
\newtheorem{Definition}[Theorem]{Definition}

\theoremstyle{remark}
\newtheorem{Remark}[Theorem]{Remark}
\newtheorem{Claim}{Claim}

\newcommand{\N}{\mathbb{N}}     
\newcommand{\R}{\mathbb{R}}     

\newcommand{\Z}{\mathbb{Z}}         









\DeclareMathOperator{\diam}{diam}			


\usepackage[mathcal]{eucal}					










\renewcommand{\geq}{\geqslant}
\renewcommand{\leq}{\leqslant}
\renewcommand{\epsilon}{\varepsilon}

\newcommand{\liminfe}{\mathop{\underline{\lim}}}





\title{\textbf{(Un)boundedness of directional maximal operators through a notion of ``Perron capacity'' and an application}}

\author{Emma D'Aniello, Anthony Gauvan and Laurent Moonens}

\begin{document}

\maketitle

\begin{abstract}
We introduce the notion of \textit{Perron capacity} of a set of slopes $\Omega \subset \mathbb{R}$. Precisely, we prove that if the Perron capacity of $\Omega$ is finite then the directional maximal operator associated $M_\Omega$ is not bounded on $L^p(\mathbb{R}^2)$ for any $1 < p < \infty$. This allows us to prove that the set $$\Omega_{ \boldsymbol{e}} =\left\{ \frac{\cos n}{n}: n\in \mathbb{N}^* \right\}$$ is not finitely lacunary which answers a question raised by A. Stokolos.
\end{abstract}

\footnotetext{\emph{2020 Mathematics Subject Classification}: Primary: 42B25, 26B05; Secondary: 42B35.\\\emph{Keywords}: Lebesgue's Differentiation Theorem, Directional Maximal Functions, Lacunary Sets of Finite Order.}

\section{Introduction}
It is well known, since the original work of Jensen, Marcinkiewicz and Zygmund \cite{JMZ}, that Lebesgue's differentiation formula, \emph{i.e.} the equality for a.e.\ $x\in\R^n$:
\begin{equation}\label{eq.leb}
f(x)=\lim_{\begin{subarray}{c} R\ni x\\\diam R\to 0\end{subarray}} \frac{1}{|R|} \int_R f,
\end{equation}
holds along $n$-dimensional \emph{rectangles} $R$ parallel to the coordinate hyperplanes (called standard rectangles), whenever one has $f\in L\log^{n-1}L (\R^n)$~---~the latter Orlicz space being, moreover, the largest for which the above formula holds if one does not assume any extra hypothesis on the rectangles $R$; the beautiful survey by A.~Stokolos \cite{STOKOLOSAF} provides a quite exhaustive state of the art on the topic~---~see also \cite{DM2017} for sufficient conditions (of geometric nature) on $\mathcal{R}$ ensuring that $L\log^{n-1}L(\R^n)$ is the largest space for which \eqref{eq.leb} holds whenever $R$ is restricted to belong to the class $\mathcal{R}$.

In dimension $n=2$, the nature of the problem changes when one allows rectangles $R$ to rotate around one of their vertices, requiring that the slope of their longest side belongs to a fixed set $\Omega$ (call $\mathcal{R}_\Omega$ the family of such rectangles in the plane). It has been shown \emph{e.g.} by Cordoba and Fefferman \cite{CF} that when $\Omega=\{\omega_k:k\in\N\}$ is a lacunary sequence (see below for a definition), then formula \eqref{eq.leb} holds for any $f\in L^p(\R^2)$ if one has 
$p \in [2,  +\infty[$. This was later generalized for arbitrary $1<p<\infty$ by Nagel, Stein and Wainger \cite{NSW}. Given a set of slopes $\Omega$, the question of finding Orlicz spaces (the largest possible) beyond $L^p$ spaces, for all elements of which formula \eqref{eq.leb} holds is often delicate; issues in these directions have been studied in works involving some of the current authors, see \cite{DMR} and \cite{DM} for example.

Concerning the possibility of \eqref{eq.leb} holding for all $f\in L^p(\R^2)$, $1<p\leq\infty$, it has been shown \emph{e.g.} by Katz \cite{KATZ}, Bateman and Katz \cite{BK}, Hare \cite{HARE} and more recently by Bateman \cite{BATEMAN} that Cantor sets (and even uncountable sets) never give rise to formula \eqref{eq.leb} for all $f\in L^p(\R^2)$, $1<p\leq \infty$~---~Bateman's work \cite{BATEMAN} even gives a complete characterization, even though not always simple to implement \emph{in practice} (see below), of sets $\Omega$ for which it does hold in any $L^p(\R^2)$, $1<p\leq\infty$: those are sets one will call \emph{finitely lacunary} in the sequel.

Before to state more precisely Bateman's result, let us mention an important tool in its proof: the possibility, when a set of slopes $\Omega$ is too large, to find finite families of rectangles in $\mathcal{R}_\Omega$ such that the union of their (centered) homothetic expansions occupies a much larger area in the plane than their union does; we call this a \emph{Kakeya blow}.

When no restriction is made on directions of rectangles in play, the possibility of exhibiting a Kakeya blow is a standard result in measure theory (see Busemann and Feller's work \cite{BF} for a first construction of this type):
\begin{Theorem}[Kakeya Blow]
For any large $A \gg 1$, there exists a finite family of rectangles $\left\{ R_i \right\}_{i \leq N} $ such that we have $$A\left| \bigcup_{i \leq N} R_i \right| \leq \left| \bigcup_{i \leq N} 4R_i \right|.$$ Here, $4R$ stands for the $4$-fold dilation of $R$ by its center.
\end{Theorem}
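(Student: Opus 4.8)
This is the classical ``Perron tree'' phenomenon (see \cite{BF} and the literature on Besicovitch sets), and the plan is to extract the family $\{R_i\}$ from a Perron tree of large order. Fix once and for all a triangle $T$ of area $1$, with apex $P_0$, base $B$ of length $b$, height $\ell$ (so $\tfrac12 b\ell=1$) and opening angle $\alpha$ at $P_0$. For $N=2^n$, subdivide $B$ into $N$ equal segments by the cevians from $P_0$: this cuts $T$ into $N$ thin triangles $T_1,\dots,T_N$, all with apex $P_0$, each of area $1/N$, whose ``spines'' (the medians from $P_0$) point in $N$ distinct directions $\theta_1,\dots,\theta_N$ spread roughly evenly over an angular range of size $\asymp\alpha$. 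The classical input I would invoke is the \emph{Perron tree estimate}: one may translate each $T_j$ \emph{parallel to $B$}, into a triangle $T_j'$, so that $|\mathcal P_n|\to 0$ as $n\to\infty$, where $\mathcal P_n:=\bigcup_{j\le N}T_j'$. Translations preserve areas and directions, so the $T_j'$ still point in the directions $\theta_j$, still satisfy $\sum_j|T_j'|=1$, and --- the translations in the construction being bounded by a fixed multiple of $b$ --- their apices $P_j'$ stay in a fixed bounded region $W$.

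Next I would pass from triangles to rectangles \emph{from the inside}: for each $j$, pick a rectangle $R_j\subseteq T_j'$ with one pair of sides parallel to the spine of $T_j'$ and occupying the middle half of that spine, so that $R_j$ has length $\asymp\ell$, width $\asymp b/N$, is centred near the midpoint of the spine, and $|R_j|\asymp|T_j'|=1/N$. Since $R_j\subseteq T_j'$ for every $j$, we get immediately
\[
\Bigl|\bigcup_{j\le N}R_j\Bigr|\;\le\;|\mathcal P_n|,
\]
which tends to $0$ as $n\to\infty$.

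The crux is that the dilates cannot be small. Since $R_j$ is centred near the midpoint of a segment of length $\asymp\ell$ issuing from $P_j'$, the dilate $4R_j$ overshoots the apex: it contains a ``needle'' $E_j$, namely a rectangle of length $\asymp\ell$ and width $\asymp b/N$ starting at (a point near) $P_j'$ and running in the direction $-\theta_j$, on the side of $P_j'$ opposite to $T_j'$. Hence $\bigcup_{j\le N}4R_j\supseteq\bigcup_{j\le N}E_j$, where the $E_j$ are $N$ needles emanating from the bounded region $W$, in the $N$ directions $-\theta_j$ (spread roughly evenly over an angular range of size $\asymp\alpha$), of length $\asymp\ell$ and of total width $\asymp b$. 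A routine fan/sector estimate then shows that $\bigcup_{j\le N}E_j$ covers a sector of angular width $\asymp\alpha$ and radius $\asymp\ell$, hence of area $\asymp\alpha\ell^2\asymp b\ell\asymp1$; let $c_0>0$ denote the resulting absolute lower bound, so that $\bigl|\bigcup_{j\le N}4R_j\bigr|\ge c_0$ for all $n$.

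Putting the two together proves the theorem: given $A\gg1$, use the Perron tree estimate to choose $n$ with $|\mathcal P_n|\le c_0/A$, and put $N=2^n$; then
\[
A\Bigl|\bigcup_{j\le N}R_j\Bigr|\;\le\;A\,|\mathcal P_n|\;\le\;c_0\;\le\;\Bigl|\bigcup_{j\le N}4R_j\Bigr|.
\]
I expect the main obstacle to be the Perron tree estimate itself: the naive ``bisect and slide halfway'' recursion leaves $|\mathcal P_n|$ bounded below, and one must tune the translation amounts (or regroup the cevian triangles across scales) with care to drive the area to $0$ --- I would quote this classical fact rather than reprove it. The secondary difficulty is to make the needle estimate honest, i.e.\ to check that once the $T_j'$ are fixed the \emph{forced} extensions $E_j$ genuinely sweep out a sector of area bounded below: this uses that the apices $P_j'$ remain in a bounded set and that the directions $\theta_j$ stay roughly equidistributed on a fixed arc (both features of the construction that must be tracked), together with the elementary fact that the mid-rectangle of a thin triangle, dilated by $4$ about its centre, overshoots the apex by a definite fraction of the triangle's size.
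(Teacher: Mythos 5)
The paper does not actually prove this statement: it is quoted as classical background with a reference to Busemann--Feller \cite{BF}, so there is no internal proof to compare yours against. Your overall architecture (Perron tree $\Rightarrow$ small union of inscribed rectangles; $4$-fold dilates overshoot the apices $\Rightarrow$ union of dilates bounded below) is indeed the standard route.

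There is, however, a genuine gap at the crux, namely the lower bound $\bigl|\bigcup_{j}4R_j\bigr|\ge c_0$. You justify it by a ``routine fan/sector estimate'' using only that the apices $P_j'$ lie in a bounded set and that the directions $\theta_j$ are roughly equidistributed on an arc of width $\asymp\alpha$. That principle is false: $N$ needles of width $\asymp b/N$ and length $\asymp\ell$, with equidistributed directions and base points ranging over a set of diameter $\asymp b$ (which is exactly what the Perron-tree translations produce, since they move the apices by amounts up to $\asymp b\gg b/N$), can themselves be arranged so that their union has measure $O(1/\log N)\to 0$ --- this is precisely the Besicovitch/Perron phenomenon you exploit in the first half of the argument, applied on the other side of the apex line. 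The sector picture is only valid when the needles emanate from a common point (or from a set of diameter $\lesssim b/N$), and a Cauchy--Schwarz/overlap count loses a factor $\log N$, which is fatal here. The correct fix is construction-specific: one must arrange (or verify, in the classical construction) that the backward extensions are \emph{essentially disjoint} --- for instance by taking all rectangles to emanate from pairwise disjoint intervals on a common line with slopes monotone in the starting height, so that they converge on one side (small union, the Perron-tree estimate) and diverge on the other (disjoint extensions, union of measure $\sum_j|R_j|\asymp 1$). As written, your argument asserts the lower bound from hypotheses that provably do not imply it, so this step needs to be replaced rather than merely ``made honest.''
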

Kakeya blows have deep implications in harmonic analysis: for example, a concrete construction proving the above theorem allowed Fefferman to disprove the Disc multiplier conjecture in \cite{FDMC}.

A natural question, given a set of slopes $\Omega$, then becomes the following: can we still construct Kakeya blows using \emph{only} rectangles in $\mathcal{R}_\Omega$, namely rectangles the longest side of which makes an angle $\omega \in \Omega$ with the horizontal axis (see also Stokolos \cite{STOKOLOSM} and Hagelstein and Stokolos \cite{HSM} concerning the links between directional maximal operators and multipliers)? In his study of the so-called \textit{directional maximal operator} $M_\Omega$ defined for a locally integrable $f : \mathbb{R}^2 \rightarrow \mathbb{R}$ and $x \in \mathbb{R}^2$ as $$M_\Omega f(x) := \sup_{ x \in R \in \mathcal{R}_\Omega} \frac{1}{\left| R\right|}\int_R \left| f\right|,$$
Bateman \cite{BATEMAN} introduced a notion we shall call \textit{finitely lacunary set} and proved the following Theorem.

\begin{Theorem}[Bateman]\label{T0}
The following conditions are equivalent \begin{itemize}
    \item the set $\Omega$ is not finitely lacunary ;
    \item for any $A \gg 1$, there exists a finite family of rectangles $\{ R_i \}$ such that the longest side of each makes an angle $\omega \in \Omega$ with the $x$-axis and also satisfying $A\left| \bigcup_{i \in I} R_i \right| \leq \left| \bigcup_{i \in I} 4R_i \right|$ ; 
    \item the operator $M_\Omega$ is not bounded on $L^p(\mathbb{R}^2)$ for any $1 < p < \infty$.
\end{itemize}
Moreover, the following conditions are also equivalent \begin{itemize}
    \item the set $\Omega$ is finitely lacunary
    \item the operator $M_\Omega$ is bounded on $L^p(\mathbb{R}^2)$ for any $1 < p < \infty$.
\end{itemize}
\end{Theorem}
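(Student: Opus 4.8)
The statement to prove is Bateman's theorem, and although it is a substantial result, the natural plan is to reduce its two displayed chains of equivalences to three implications plus one triviality. Write $(P)$ for ``$\Omega$ is finitely lacunary'', $(K)$ for the second bullet (arbitrarily strong Kakeya blows can be built inside $\mathcal{R}_\Omega$), $(U)$ for the third bullet ($M_\Omega$ unbounded on $L^p(\R^2)$ for every $1<p<\infty$), and $(B)$ for ``$M_\Omega$ bounded on $L^p(\R^2)$ for every $1<p<\infty$''. I would prove $\neg(P)\Rightarrow(K)$, then $(K)\Rightarrow(U)$, then $(P)\Rightarrow(B)$; since trivially $(U)\Rightarrow\neg(B)$, the cycle $\neg(P)\Rightarrow(K)\Rightarrow(U)\Rightarrow\neg(B)\Rightarrow\neg(P)$ — the last arrow being the contrapositive of $(P)\Rightarrow(B)$ — shows that $\neg(P)$, $(K)$, $(U)$, $\neg(B)$ are mutually equivalent (the first chain) and that $(P)\Leftrightarrow(B)$ (the second chain).

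\smallskip
For $(K)\Rightarrow(U)$, the soft implication, I would fix $A\gg1$ and a finite family $\{R_i\}_{i\in I}\subset\mathcal{R}_\Omega$ with $A\,\bigl|\bigcup_i R_i\bigr|\leq\bigl|\bigcup_i 4R_i\bigr|$, set $E=\bigcup_i R_i$, and test $M_\Omega$ on $f=\mathbf{1}_E\in L^p(\R^2)$. For each $i$ and each $x\in 4R_i$ one can pick a rectangle $R\in\mathcal{R}_\Omega$ with the same slope as $R_i$, with $|R|\lesssim|R_i|$ and $R_i\cup\{x\}\subset R$, so that $M_\Omega f(x)\geq|E\cap R|/|R|\geq|R_i|/|R|\gtrsim1$. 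Hence $\{M_\Omega f>c\}\supseteq\bigcup_i 4R_i$ for a dimensional constant $c>0$, whence $\|M_\Omega f\|_{L^{p,\infty}}^p\gtrsim c^p\bigl|\bigcup_i 4R_i\bigr|\geq c^p A\,|E|=c^p A\,\|f\|_p^p$. Letting $A\to\infty$ shows $M_\Omega$ fails to be of weak type $(p,p)$ for every $p\in(1,\infty)$; in particular $(U)$, and a fortiori $\neg(B)$.

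\smallskip
For $(P)\Rightarrow(B)$, the positive direction, I would induct on the \emph{order} of lacunarity in the recursive definition of finitely lacunary sets: a finite set has order $0$; $\Omega$ has order $\leq N$ if, up to a finite union, it splits into pieces $\Omega\cap I_j$ indexed by the terms $a_j$ of a lacunary sequence, with $I_j\to a_j$ lacunarily, and with each $\Omega\cap I_j$ of order $\leq N-1$ after the affine change of variables normalizing $I_j$; finitely lacunary means order $\leq N$ for some $N$. The base case $N=1$ is exactly the Nagel--Stein--Wainger $L^p$ bound for lacunary sets of directions \cite{NSW} recalled above (for $N=0$, $M_\Omega$ is a finite maximum of averages). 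For the inductive step I would freeze the outermost lacunary sequence $\{a_j\}$, dominate $M_\Omega$ pointwise by a bounded number of compositions of the one-parameter lacunary directional maximal operator attached to $\{a_j\}$ with the operators $M_{\Omega\cap I_j}$, and then feed the vector-valued (Fefferman--Stein type) inequality for the lacunary operator into the inductive hypothesis — \emph{crucially with an operator-norm bound depending only on $N$ and $p$, not on the number of pieces} — which closes the induction; finite unions are absorbed by subadditivity.

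\smallskip
The implication $\neg(P)\Rightarrow(K)$ is where the real work lies, and I expect it to be the main obstacle. The first step is a \emph{quantitative structural dichotomy}: $\Omega$ fails to be lacunary of any finite order precisely when, at arbitrarily fine scales, it contains configurations of slopes that are ``uniformly spread and nested'' — concretely, for each $N$ one can extract from $\Omega$ a finite set of slopes arranged in $N$ levels whose consecutive gaps do \emph{not} decay geometrically relative to the gaps at the level above, so that the rectangles carried by these slopes can be sheared into one another through $N$ successive stages. The second step is the construction itself, a directional version of the Besicovitch--Perron-tree mechanism underlying the unrestricted Kakeya Blow theorem recalled above: starting from a single rectangle one iterates, at each stage replacing every current rectangle by a fan of thin rectangles whose slopes are drawn from the bad configuration and which are translated so that their $4$-fold dilates pile up heavily while the rectangles themselves still tile a set whose measure has shrunk by a definite factor; after $\sim\log A$ stages one reaches $\bigl|\bigcup_i R_i\bigr|\leq A^{-1}\bigl|\bigcup_i 4R_i\bigr|$, and every rectangle used lies in $\mathcal{R}_\Omega$ by construction. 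The delicate point — and the one genuinely due to Bateman — is to match, stage by stage, the amount of spreading demanded by the Perron tree with the amount of spreading actually available in $\Omega$; this is precisely what the structural dichotomy of the first step is designed to supply, and it is the reason a naive imitation of the all-slopes construction fails for a general $\Omega$.
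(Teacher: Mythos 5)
The paper does not prove this statement at all: Theorem~\ref{T0} is quoted as Bateman's theorem and used as a black box (via \cite{BATEMAN}), so there is no in-paper argument to compare yours against. Judged on its own, your outline reproduces the architecture of the proof as it exists in the literature --- the cycle $\neg(P)\Rightarrow(K)\Rightarrow(U)\Rightarrow\neg(B)\Rightarrow\neg(P)$ is the right reduction, and your $(K)\Rightarrow(U)$ step is complete and correct (one can even take $R=4R_i$ itself as the testing rectangle, giving $M_\Omega\mathbf{1}_E\geq 1/16$ on $\bigcup_i 4R_i$ and hence failure of the weak type $(p,p)$ with constant at least $cA^{1/p}$).

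The two substantive implications, however, are plans rather than proofs. For $(P)\Rightarrow(B)$ you correctly isolate the crux --- an induction on the lacunarity order in which the operator-norm bound must be uniform in the number of pieces at each level --- but the almost-orthogonality/vector-valued principle that delivers this uniformity (due to Alfonseca, Soria and Vargas, building on Nagel--Stein--Wainger \cite{NSW}) is asserted, not established; without it the induction does not close, since a naive subadditivity bound degrades with the number of intervals $I_j$. For $\neg(P)\Rightarrow(K)$, which is the genuinely new content of Bateman's theorem, your ``quantitative structural dichotomy'' and the ``stage-by-stage matching'' with the Perron-tree construction are exactly the places where Bateman's actual machinery (encoding $\Omega$ into a binary tree, showing that failure of finite lacunarity forces the existence of subtrees with arbitrarily large splitting number, and running the Kakeya construction along such subtrees) is required; the proposal names the difficulty accurately but does not supply the argument. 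So the sketch is a faithful roadmap of the known proof, not a proof, which is consistent with the paper's own decision to cite the result rather than reprove it.
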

In the latter theorem, it needs to be explained what is meant by ``finitely lacunary''. We recall it here, following the presentation by Kroc and Pramanik \cite{KP}. We start by defining the notion of \textit{lacunary sequence} and then the notion of \textit{lacunary set of finite order}.

\begin{Definition}[Lacunary sequence]
We say that a sequence of real numbers $L=(L_k)$ is a lacunary sequence converging to $\ell \in \mathbb{R}$ when there holds $$|\ell-\ell_{k+1}| \leq \frac{1}{2}|\ell - \ell_k|$$ for any $k$.
\end{Definition}

For example the sequences $L_1 := \left(\frac{1}{2^k}\right)_{k\geq 2}$ and $L_2 := \left(\frac{1}{k!}\right)_{k\geq 4}$ are lacunary.

\begin{Definition}[Lacunary set of finite order]
A lacunary set of order $0$ in $\mathbb{R}$ is a set which is either empty or a singleton. Recursively, for $N \in \mathbb{N}^*$, we say that a set $\Omega$ included in $\mathbb{R}$ is a lacunary set of order at most $N+1$~---~and write $\Omega \in \Lambda(N+1)$~---~if there exists a lacunary sequence $L$ with the following properties : for any $a,b \in L$ such that $a < b$ and $(a,b) \cap L = \emptyset$, the set $\Omega \cap (a,b)$ is a lacunary set of order at most $N$ \textit{i.e.} $\Omega \cap (a,b) \in \Lambda(N)$.
\end{Definition}

For example the set $$\Omega := \left\{ \frac{1}{2^k} + \frac{1}{4^l} : k,l \in \mathbb{N}, l \leq k \right\} $$ is a lacunary set of order $2$. In this case, observe that the set $\Omega$ cannot be re-written as $\Omega = \left\{ \omega_k : k \in \mathbb{N}^* \right\}$ where $(\omega_k)$ is a monotone sequence, since it has several points of accumulation. We can finally give a definition of a finitely lacunary set.

\begin{figure}[h!]
\centering
\includegraphics[scale=1]{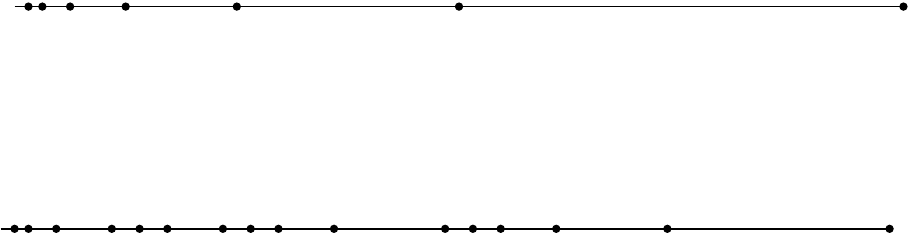}
\caption{A representation of a lacunary sequence and a lacunary set of order $2$.}  
\end{figure}

\begin{Definition}[Finitely lacunary set]
A set $\Omega$ in $\mathbb{R}$ is said to be \textit{finitely lacunary} if there exists a finite number of set $\Omega_1, \dots, \Omega_M$ which are lacunary of finite order such that $$ \Omega \subset \bigcup_{k \leq M } \Omega_k.$$
\end{Definition}

Even if Theorem~\ref{T0} gives a full characterization of the behavior of directional maximal operators, it is usually hard \emph{in practice} to decide whether a set of slopes $\Omega$ is finitely lacunary. For example, it was known that the set of slopes $\Omega_0$ defined by $$\Omega_0 := \left\{ \frac{1}{n} : n \in \mathbb{N}^* \right\}$$ generates a directional maximal operator $M_{\Omega_0}$ which is not bounded on $L^p(\mathbb{R}^2)$ for any $1 < p < \infty$ (see \emph{e.g.} \cite[Theorem~2.1]{DG}); using Theorem~\ref{T0} one can hence assert that the set of directions $\Omega_0$ is not finitely lacunary. In the case where the set of directions $\Omega$ is a discrete set which can be ordered in $\R_+^*$, more precisely in the case where $\Omega = \{\frac{1}{u}: u \in U\}$ and we can write $U = \left\{  u_k:k\in\N^*  \right\} \subset \R_+^*$ for an increasing sequence $(u_k)_{k\in\N^*}$, Hare and R\"{o}nning \cite{HR} provided a \textit{quantitative} sufficient condition on $U$ which ensures that \textit{the maximal operator associated $M_U$ is not bounded on $L^p(\mathbb{R}^2)$.} Precisely, they proved the following Theorem.

\begin{Theorem}[Hare-R\"{o}nning \cite{HR}]\label{T5}
Assume that $U=(u_k)_{k\in\N^*} \subset (0,\pi/2)$ is an increasing sequence of directions and that one has $$ G_U := \sup_{\begin{subarray}{c}k \geq 1\\ l \leq k\end{subarray}} \left( \frac{u_{k+2l} - u_{k+l}}{u_{k+l} - u_{k}} + \frac{u_{k+l} - u_{k}}{u_{k+2l} - u_{k+l}} \right) < \infty.$$ Then for any $p \in (1,\infty)$, one has $\| M_U\|_p = \infty$.
\end{Theorem}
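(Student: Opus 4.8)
The natural route is through Bateman's characterisation (Theorem~\ref{T0}): it suffices to produce, for every $A\gg 1$, a \emph{finite} family of rectangles whose longest sides make angles in $U$ and which form a Kakeya blow of ratio $A$ --- equivalently, to show that $U$ is not finitely lacunary. The plan is to manufacture such families as Perron trees built directly from $U$, the combinatorial input being the self-similar regularity encoded in $G_U<\infty$.

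First I would unwind the hypothesis. Writing $d_i:=u_{i+1}-u_i>0$ and $S(k,l):=u_{k+l}-u_k=\sum_{i=k}^{k+l-1}d_i$, the inequality $a+a^{-1}\le G_U$ forces $a\in[G_U^{-1},G_U]$, so that $G_U^{-1}\le S(k+l,l)/S(k,l)\le G_U$ whenever $1\le l\le k$. Fix $n\ge 1$ and any index $k\ge 2^n$, and look at the block of slopes $u_k,u_{k+1},\dots,u_{k+2^n-1}$. Applying the last inequality at the dyadic scales $l=2^{n-1},2^{n-2},\dots,1$ inside the index interval $[k,k+2^n)$ exhibits a complete binary tree of ``angular widths'': the interval splits into two halves whose $S$-lengths are within the factor $G_U$ of one another; each half splits again with the same \emph{uniform} comparability; and so on down to the $2^n$ singletons. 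Thus these $2^n$ directions carry, at each of $n$ consecutive scales, a partition into consecutive blocks of pairwise $G_U$-comparable angular width --- precisely the scaffolding of a Perron tree whose balance constant depends only on $G_U$. (In particular the widest of the $d_i$ in the block is at most $\big(G_U/(1+G_U)\big)^n$ times $S(k,2^n)=u_{k+2^n}-u_k$, so these directions are genuinely ``spread out'' at every scale, the separation improving geometrically in $n$.)

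Next I would run the classical C\'ordoba--Fefferman / Perron sprouting construction on this scaffold: to the $j$-th slope attach a long thin rectangle $R_j$, and translate the $R_j$ so that at each of the $n$ levels the two sub-fans are slid to overlap along essentially half of their common angular width --- the $G_U$-comparability of the two widths being exactly what makes this overlap admissible and quantitatively efficient. The usual book-keeping of the overlaps (the estimate showing a depth-$n$ Perron tree occupies only a vanishing fraction of its ambient fan, the fraction tending to $0$ as $n\to\infty$, while the fixed dilations $4R_j$ recover a fixed proportion of that fan) yields $\big|\bigcup_j R_j\big|\le \varepsilon(n)\,\big|\bigcup_j 4R_j\big|$ with $\varepsilon(n)\to 0$; choosing $n$ with $\varepsilon(n)\le 1/A$ and then any $k\ge 2^n$ gives the required blow with all angles in $U$. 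Theorem~\ref{T0} then gives $\|M_U\|_p=\infty$ for every $p\in(1,\infty)$.

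The main obstacle is the quantitative core of the third step: arranging the translations so that the \emph{uniform but inexact} balance furnished by $G_U$ still produces a definite proportional gain at \emph{every} one of the $n$ levels, and controlling how these gains compound so that $\varepsilon(n)$ genuinely tends to $0$ with a rate depending only on $G_U$. Everything else is routine once one has checked that the block $[k,k+2^n)$ really does satisfy, at every dyadic scale, the comparability $S(k+l,l)\asymp S(k,l)$ with constant $G_U$ --- the only place the precise form of the hypothesis enters. (An alternative is to argue the contrapositive: prove that any increasing sequence contained in a lacunary set of order $\le N$ has each of its increments dominated by a constant $C_N$ times the largest gap occurring in between --- by induction on $N$ from the geometric-tail estimate for a single lacunary sequence --- and derive a contradiction from the fact, already visible in the displayed ``spread'' estimate of the second paragraph, that $G_U<\infty$ forces $U$ to be too evenly spread at every scale to sit inside boundedly many such sets; the Perron-tree construction, however, exhibits the obstruction most directly.)
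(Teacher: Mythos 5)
Your overall architecture is sound and matches the geometry underlying the cited proof: since $a+a^{-1}\le G_U$ forces $a\in[G_U^{-1},G_U]$, every dyadic block of indices $[k,k+2^n)$ with $k\ge 2^n$ does split, at each of the $n$ scales, into two halves of $G_U$-comparable angular width, and this scaffold is exactly what Hare and R\"onning feed into a Perron-tree construction. But the step you describe as ``the usual book-keeping'' and simultaneously flag as ``the main obstacle'' is not book-keeping: it is the entire analytic content of the theorem. The classical sprouting estimate is proved for exactly balanced (e.g.\ equally spaced) directions; redoing it when the two sub-fans at each level are only comparable up to the factor $G_U$, and showing that the area losses still compound to give $\varepsilon(n)\to 0$ at a rate depending only on $G_U$ while the dilated rectangles recover a fixed proportion of the ambient fan, is precisely the quantitative result of \cite{HR} that the present paper imports wholesale as the Proposition of Section 3, in the form $|X|\le\left[\alpha^{2N}+G_U(1-\alpha)^2\right]\left|\{M_U\mathbf{1}_X>\frac14\}\right|$. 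A proposal that defers exactly this estimate has reduced the theorem to itself; what you have is a correct set-up plus a citation-shaped hole where the proof should be. (Note also that the paper itself does not reprove Theorem~\ref{T5}; it quotes it, and only its quantitative kernel is used later.)

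Separately, the detour through Bateman's Theorem~\ref{T0} is unnecessary for the implication you use. Once you have rectangles with $A\left|\bigcup_i R_i\right|\le\left|\bigcup_i 4R_i\right|$, unboundedness follows in three lines by testing $M_U$ on $f=\mathbf{1}_{\bigcup_i R_i}$: for $x\in 4R_i$ one has $M_Uf(x)\ge |R_i|/|4R_i|=1/16$, whence $\|M_Uf\|_p^p\ge 16^{-p}A\|f\|_p^p$, and $A$ is arbitrary. This is essentially how the paper concludes from the quoted Proposition (via the level set $\{M_U\mathbf{1}_X>\frac14\}$), and it keeps the argument self-contained and quantitative; invoking Bateman's deep equivalence for its elementary direction only obscures that. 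Your parenthetical ``contrapositive'' alternative would in addition require the hard direction of Bateman's theorem, so if you complete the argument, do it via the direct test-function estimate.
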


In particular, applying Bateman's Theorem, it follows that if we have $G_U < \infty$ then $U$ is not finitely lacunary~---~other uses of Perron trees have also been made in works involving the current authors, see \emph{e.g.}. \cite{DGMR}, \cite{G1} and \cite{G3}. Hence, usually  one does not prove \textit{directly} that a set of slopes $\Omega$ is not finitely lacunary (or that it is): in general, it is a consequence of the (un)-boundedness of the associated maximal operator $M_\Omega$~---~obtained by other methods~---~and Bateman's Theorem.

Our results are motivated by the following question raised by A. Stokolos : what can be said (for example) about the set of directions $\Omega_{ \boldsymbol{e}}$ defined as $$\Omega_{ \boldsymbol{e}} := \left\{ \frac{\cos n}{n} : n \in \mathbb{N}^* \right\} \ ?$$ Can one prove that it is finitely lacunary or not? In this text, we prove it is \emph{not} finitely lacunary.

\section{Results}

We denote by $\mathcal{R}$ the set containing all rectangles of the plane and for $R \in \mathcal{R}$, we denote by $\omega_R \in [0,\pi)$ the angle that the longest side of the rectangle $R$ makes with the $x$-axis. For any set $\Omega \subset \mathbb{R}$ (which should be thought as a set of \textit{slopes}) we define the family of rectangle $\mathcal{R}_\Omega$ as $$ \mathcal{R}_\Omega := \left\{ R \in \mathcal{R} : \tan(\omega_R) \in \Omega \right\}.$$ We then define the \textit{directional maximal operator} $M_\Omega$ as $$ M_\Omega f(x) := \sup_{x \in R \in \mathcal{R}_\Omega} \frac{1}{\left| R \right|}\int_R |f|.$$ We finally define the \textit{Perron capacity} of a set of slopes $\Omega$ included in $\mathbb{R}$ as follow.

\begin{Definition}[Perron capacity of a set of slopes]
For any set of slopes $\Omega \subset \mathbb{R}$, we define its \textit{Perron capacity} as $$P_\Omega :=\liminfe_{N \to\infty} \inf_{ \begin{subarray}{c}U \subset \Omega\\ \#U = 2^N \end{subarray}}  G_U \in [2,\infty]$$ where $G_U = \sup_{\begin{subarray}{c}k,l \geq 1\\ k+2l \leq 2^N\end{subarray}} \left( \frac{u_{k+2l} - u_{k+l}}{u_{k+l} - u_{k}} + \frac{u_{k+l} - u_{k}}{u_{k+2l} - u_{k+l}} \right)$ if $U = \{ u_1,\dots, u_{2^N} \}$ with $u_1<u_2<\cdots <u_N$.
\end{Definition}

Our first result is the following: it gives a \textit{sufficient} and \textit{quantitative} condition on an arbitrary set $\Omega$ which ensures that the associated maximal operator is unbounded on $L^p(\mathbb{R}^2)$ for any $1 < p < \infty$. In contrast with Hare and Ronning Theorem, we do not assume that our set of slopes is ordered.

\begin{Theorem}\label{T6}
Fix any set of slopes $\Omega \subset \mathbb{R}$ and suppose that we have $$P_\Omega < \infty.$$ Then for any $p \in (1,\infty)$, one has $\| M_{\Omega}\|_p = \infty$.
\end{Theorem}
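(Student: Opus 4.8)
The plan is to reduce Theorem~\ref{T6} to the Hare--Rönning mechanism (Theorem~\ref{T5}) applied not to a single increasing sequence but to a carefully chosen sequence of finite families of slopes, and then to invoke Bateman's characterization (Theorem~\ref{T0}). The hypothesis $P_\Omega<\infty$ means precisely that there is a constant $C<\infty$ and arbitrarily large $N$ for which one can find a finite set $U_N\subset\Omega$ with $\#U_N=2^N$ and $G_{U_N}\leq C$, where $G_{U_N}$ is the ``Perron ratio'' computed over the increasing enumeration $u_1<\cdots<u_{2^N}$ of $U_N$. The key observation is that the quantity $G_U$ is \emph{exactly} the combinatorial datum that the Perron-tree / Kakeya-blow construction needs: given a finite increasing family with $G_U\leq C$, one builds, by the standard splitting-and-sliding of rectangles erected on the consecutive gaps of $U$, a Perron tree whose sprouted figure has area $\gtrsim (\log_2\#U)\cdot|R|$ while the original rectangles have total essential area $\lesssim |R|$. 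Thus the blow-up factor grows with $N$, uniformly in the ambient constant $C$.

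First I would make precise the extraction: from $P_\Omega<\infty$, fix $C>P_\Omega$ and extract a strictly increasing sequence $N_1<N_2<\cdots$ together with sets $U_{N_j}\subset\Omega$, $\#U_{N_j}=2^{N_j}$, and $G_{U_{N_j}}\leq C$. Second, for each fixed $j$, I would run the Perron-tree construction on the $2^{N_j}$ slopes of $U_{N_j}$: erect thin rectangles with these slopes over a common base, and use the bound $G_{U_{N_j}}\leq C$ to control, at each of the $N_j$ levels of the binary splitting, the overlap created when one slides the two halves of a bundle so their ``triangles'' telescope. The outcome is a finite family $\mathcal{F}_j\subset\mathcal{R}_\Omega$ (after passing from slopes to actual rectangles, i.e. taking angles $\omega$ with $\tan\omega\in U_{N_j}$) with
\[
\Bigl|\bigcup_{R\in\mathcal{F}_j}R\Bigr|\leq K(C)\,|R_0|,
\qquad
\Bigl|\bigcup_{R\in\mathcal{F}_j} 4R\Bigr|\geq c(C)\,N_j\,|R_0|,
\]
for constants $K(C),c(C)>0$ depending only on $C$. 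Since $N_j\to\infty$, the dilation gain $\bigl|\bigcup 4R\bigr|/\bigl|\bigcup R\bigr|$ is unbounded, so by the equivalence in Theorem~\ref{T0} the set $\Omega$ is not finitely lacunary and $M_\Omega$ is unbounded on every $L^p(\mathbb{R}^2)$, $1<p<\infty$. Third, I would spell out that the passage from ``slopes'' to ``angles'' is harmless: on any bounded set of slopes the map $\omega\mapsto\tan\omega$ is bi-Lipschitz, and if $\Omega$ is unbounded one may first intersect with a suitable bounded window where infinitely many of the $U_{N_j}$ live (or treat the large-slope regime by an obvious reflection), so no generality is lost.

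The main obstacle is Step two: carrying out the Perron-tree estimate with the constant $G_{U_N}$ controlling the geometry, rather than the classical lacunary hypothesis. Concretely, one must verify that the ``two-parameter'' form of $G_U$, comparing the gap $u_{k+l}-u_k$ with $u_{k+2l}-u_{k+l}$ for all admissible $k,l$, is enough to guarantee at \emph{every} dyadic scale of the recursive splitting that the sheared sub-bundles overlap efficiently and that the basic rectangles do not spread out by more than a bounded factor. This is essentially the content of the Hare--Rönning argument, but one must check their proof only ever uses finitely many of the $u_k$ at a time and only through the ratio $G_U$, so that it applies verbatim to each finite family $U_{N_j}$; the bookkeeping of the constants $K(C)$, $c(C)$ and the linear-in-$N$ lower bound for the dilated union is where care is needed. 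Once that is in place, the remaining steps are routine.
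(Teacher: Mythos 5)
Your proposal is correct and rests on the same key reduction as the paper: from $P_\Omega<\infty$ one extracts finite sets $U_{N_j}\subset\Omega$ with $\#U_{N_j}=2^{N_j}$ and $G_{U_{N_j}}$ uniformly bounded, and one observes that the Hare--R\"onning Perron-tree mechanism only ever uses finitely many slopes at a time and only through the ratio $G_U$ --- exactly the black box the paper isolates as a Proposition obtained by ``a careful reading'' of \cite{HR}, and which you correctly flag as the main point to verify. You diverge only in the concluding step: you package the construction as a Kakeya blow and appeal to Theorem~\ref{T0}, whereas the paper keeps the quantitative level-set form (a set $X$ with $|X|\leq[\alpha^{2N}+G_U(1-\alpha)^2]\,|\{M_U\mathbf{1}_X>\frac14\}|$), deduces $\|M_{U_k}\|_p^p\gtrsim_p (\alpha^{2N_k}+2P_\Omega(1-\alpha)^2)^{-1}$ by the trivial Chebyshev-type inequality, and finishes with the monotonicity $\|M_\Omega\|_p\geq\|M_{U_k}\|_p$ and a choice of $\alpha$ close to $1$; this avoids importing Bateman's theorem altogether (you only need its easy direction, but the direct route is cleaner and fully quantitative). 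One cosmetic slip: in your displayed Kakeya-blow estimate the smallness should sit in $\bigl|\bigcup R\bigr|$ (of order $|R_0|/N_j$, or $[\alpha^{2N_j}+C(1-\alpha)^2]\,|R_0|$ in the paper's normalization) rather than in a linear-in-$N_j$ growth of $\bigl|\bigcup 4R\bigr|$, since all the dilated rectangles live in a fixed bounded region; the ratio, which is all that matters, is unbounded either way.
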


Loosely speaking, the fact that $P_\Omega < \infty$ indicates whether the set $\Omega$ contains arbitrary large subsets which are (more or less) uniformly distributed in $\mathbb{R}$. Our second result is an application of Theorem \ref{T6} and deals with the set of slopes $\Omega_{\boldsymbol{e}}$ defined as $$\Omega_{ \boldsymbol{e}} := \left\{ \frac{\cos n}{n} : n \in \mathbb{N}^* \right\}.$$ Namely, we prove the following Theorem.

\begin{Theorem}\label{T1}
The Perron capacity of the set $\Omega_{ \boldsymbol{e}}$ is finite \textit{i.e.} we have $P(\Omega_{ \boldsymbol{e}}) < \infty$.
\end{Theorem}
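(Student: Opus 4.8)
\textbf{Proof plan for Theorem \ref{T1}.}

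The goal is to show $P(\Omega_{\boldsymbol e})<\infty$, i.e.\ that for all large $N$ one can find a subset $U\subset\Omega_{\boldsymbol e}$ with $\#U=2^N$ whose ``Perron ratio'' $G_U$ is uniformly bounded. The plan is to exhibit explicitly, for each large integer $N$, an arithmetic-like progression hiding inside $\Omega_{\boldsymbol e}=\{\cos n/n:n\in\N^*\}$. The heuristic is that $\cos n/n$ is small (of size $1/n$) so the bulk of the variation of consecutive terms $\cos(n+1)/(n+1)-\cos n/n$ comes from the $1/n$ factor, and that over a window of integers $n\in[M,2M]$ the quantities $\cos n/n$ are spread out roughly like $1/M$; the issue is that the oscillating numerator $\cos n$ can, on occasion, nearly cancel two nearby differences. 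So first I would \emph{not} take consecutive integers: instead I would select, inside a dyadic window $n\in[M,2M]$, a sparse subset of integers $n$ for which $\cos n$ is controlled — say $\cos n\in[\tfrac12,1]$ — which is possible by equidistribution of $(n/2\pi)$ mod $1$ (Weyl), giving a positive-density set of admissible $n$'s, hence more than enough to extract $2^N$ of them once $M\gtrsim 2^N$.

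Having restricted to $n$ with $\cos n$ bounded away from $0$ and from $\pm1$, the map $n\mapsto \cos n/n$ is still not monotone, so the second step is to sort the chosen values into increasing order $u_1<\dots<u_{2^N}$ and estimate the consecutive gaps $u_{j+1}-u_j$ and, more importantly for $G_U$, the three-term ratios $\frac{u_{k+2l}-u_{k+l}}{u_{k+l}-u_k}+\frac{u_{k+l}-u_k}{u_{k+2l}-u_{k+l}}$. The key estimate I would aim for is a two-sided bound: there exist absolute constants $0<c_1<c_2<\infty$ such that, for the chosen $U$ and all valid $k,l$,
\begin{equation}\label{eq.gapbound}
c_1\,\frac{l}{M^2}\le u_{k+l}-u_k\le c_2\,\frac{l}{M^2}.
\end{equation}
From \eqref{eq.gapbound} applied to the three segments $[u_k,u_{k+l}]$, $[u_{k+l},u_{k+2l}]$ (each of ``length'' parameter $l$) one immediately gets each of the two ratios bounded above by $c_2/c_1$, hence $G_U\le 2c_2/c_1$, uniformly in $N$; taking the $\liminfe$ then yields $P(\Omega_{\boldsymbol e})\le 2c_2/c_1<\infty$, which is the claim. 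To prove \eqref{eq.gapbound} I would compare $\cos n/n$ with the ``model'' function $1/n$: writing the selected integers in increasing order as $n_1<n_2<\cdots$ inside $[M,2M]$, since they are chosen from a set of positive density $\delta$, consecutive selected integers differ by $\Theta(1)$ on average, and because each $\cos n_i\in[\tfrac12,1]$ the values $\cos n_i/n_i$ interlace the values $c/n_i$; a telescoping/summation argument controls both the number of selected $n$'s in any sub-window and the spread of the corresponding $u$-values, yielding the linear-in-$l$ growth with matching constants.

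The main obstacle is precisely establishing the \emph{lower} bound in \eqref{eq.gapbound}, i.e.\ ruling out near-collisions $u_{k+l}-u_k$ being anomalously small relative to $l/M^2$: two distinct integers $m<n$ in $[M,2M]$ can give $\cos m/m$ very close to $\cos n/n$ even when $\cos m,\cos n\in[\tfrac12,1]$, because $\cos$ varies. The cleanest way around this, which I expect to be the crux of the write-up, is to restrict further to integers $n$ in a \emph{single short arc} modulo $2\pi$ — say $n\bmod 2\pi\in[\theta_0,\theta_0+\eta]$ for a small fixed $\eta$ and a point $\theta_0$ where $\cos$ is strictly monotone and bounded away from its critical points — so that on this sub-sequence $n\mapsto\cos n$ is itself comparable to an affine function of $n\bmod 2\pi$ with controlled derivative, making $n\mapsto\cos n/n$ genuinely monotone (in fact bi-Lipschitz onto its image after rescaling by $M^2$) on the selected set. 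By Weyl equidistribution such $n$ still form a set of positive density $\sim\eta/2\pi$, so for $M$ large (specifically $M\gg 2^N$) there remain at least $2^N$ of them, and on this set the quantitative two-sided gap estimate \eqref{eq.gapbound}, hence the uniform bound on $G_U$, follows from elementary calculus estimates (mean value theorem applied to $x\mapsto g(x)/x$ for the relevant smooth interpolant $g$). I would finish by remarking that Theorem \ref{T6} then immediately gives the unboundedness of $M_{\Omega_{\boldsymbol e}}$, and Bateman's Theorem \ref{T0} that $\Omega_{\boldsymbol e}$ is not finitely lacunary, answering Stokolos's question.
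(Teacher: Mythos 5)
Your overall strategy---extracting from $\Omega_{\boldsymbol e}$, for each $N$, a $2^N$-element subset that is a controlled perturbation of an arithmetic progression, then concluding via Theorem~\ref{T6} and Bateman's theorem---is the right one and matches the paper in spirit. But the crux you yourself single out, the lower bound in your two-sided gap estimate $c_1 l/M^2\le u_{k+l}-u_k\le c_2 l/M^2$, is not actually established, and the fix you propose points in the wrong direction. Writing a selected integer as $n=2\pi m+\theta_n$ with $\theta_n\in[\theta_0,\theta_0+\eta]$, the value $\cos n/n=\cos\theta_n/n$ deviates from the model value $\cos\theta_0/n$ by roughly $|\sin\theta_0|\,|\theta_n-\theta_0|/n$, which is \emph{first order} in $\eta$ (up to $|\sin\theta_0|\,\eta/M$) precisely because you chose $\theta_0$ \emph{away} from the critical points of $\cos$. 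The individual sorted gaps you must preserve are of size $(n_{i+1}-n_i)/M^2$; since you need at least $2^N$ selected integers in $[M,2M]$, the typical gap $n_{i+1}-n_i$ is at most about $M/2^N$, so preserving even the typical gap forces $\eta\lesssim 2^{-N}$, and the perturbation is then only \emph{comparable} to the average gap, not dominated by the smallest one. Worse, the smallest gap $n_{i+1}-n_i$ between selected integers is governed by how closely small integers can approach $2\pi\Z$, i.e.\ by the irrationality measure of $\pi$, which is not known to be small enough for your purposes. So the sorted gaps get scrambled and $G_U$ is not bounded by this argument.

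The paper sidesteps both obstructions at once with a ``single generator'' trick that I would encourage you to adopt. The arc is centered at the critical point $0\bmod 2\pi$, so the perturbation of $\cos$ is \emph{quadratic} in the distance to $2\pi\Z$; and instead of taking all admissible integers in a dyadic window, one picks a \emph{single} integer $a$ with $\dist(a,2\pi\Z)<2^{-2N}$ (possible since $\N+2\pi\Z$ is dense in $\R$, no Weyl equidistribution needed) and uses $U=\{\,ka/\cos(ka):1\le k\le 2^N\,\}$. The underlying integers $ka$ then form an \emph{exact} arithmetic progression of common difference $a$---so there is no minimal-gap problem and no Diophantine input about $\pi$ beyond irrationality---while each $ka$ lies within $2^{-N}$ of $2\pi\Z$, whence $1<1/\cos(ka)\le 1+2^{-2N}$ uniformly in $k$. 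A short computation (Proposition~\ref{P2}) shows that such a multiplicative perturbation of an arithmetic progression, with $2^N\|\epsilon\|_\infty\le\tfrac12$, has Perron factor at most $6$, giving $P(\Omega_{\boldsymbol e})\le 6$. Your plan as written does not close the gap at its acknowledged crux, and the specific mechanism you propose (a short arc where $\cos$ has non-vanishing derivative) would need to be replaced, not merely elaborated.
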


Hence, as a consequence of Theorems \ref{T1} and \ref{T6} we know that~---~for any $1 < p < \infty$~---~we have $$\|M_{\Omega_{ \boldsymbol{e}}}\|_p = \infty.$$ As an application of Bateman's Theorem, we can hence say that the set $\Omega_{ \boldsymbol{e}}$ is not finitely lacunary.

\section{Proof of Theorem \ref{T6}}

Recall that for any set $U := \left\{ u_1,\dots,  u_{2^N} \right\} \subset \mathbb{R}_+$ with $u_1<u_1<\cdots <u_{2^N}$ (so that $U$ has $2^N$ elements), one define its \textit{Perron factor} as $$G_U :=  \sup_{\begin{subarray}{c}k,l \geq 1\\ k+2l \leq 2^N\end{subarray}} \left( \frac{u_{k+2l} - u_{k+l}}{u_{k+l} - u_{k}} + \frac{u_{k+l} - u_{k}}{u_{k+2l} - u_{k+l}} \right) \in (0,\infty).$$ The following proposition is a careful reading of Hare and R\"{o}nning's work \cite{HR}.

\begin{Proposition}
There exists $\epsilon_0\in (0,1)$ such that for any $\alpha\in [1-\epsilon_0,1)$ there exists a set $X\subseteq\R^2$ for which one has: $$ \left| X \right| \leq  \left[ \alpha^{2N} + G_U(1-\alpha)^2 \right]\left| \{ M_U \mathbf{1}_X > \frac{1}{4} \} \right|.$$ 
\end{Proposition}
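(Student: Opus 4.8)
## Proof plan for the Proposition

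The plan is to build a classical Perron tree over the set $U$ and read off a lower bound for $M_U \mathbf 1_X$ on a ``shifted'' copy of the tree. Order $U = \{u_1 < \cdots < u_{2^N}\}$ and associate to each consecutive pair a thin rectangle $R_i$ of slope $u_i$ sitting in a unit-height strip, arranged so that $\{R_i\}$ tile (up to overlaps controlled by $G_U$) a triangle $T$ of area comparable to $1$. The Perron tree construction: split $U$ into two halves, build the subtrees recursively, then translate one subtree over the other by an amount dictated by the midpoints $u_{k+l}$; the three–term ratios in $G_U$ are exactly what controls how much the pieces can be compressed at each of the $N$ stages. After $N$ stages the compressed figure $X$ has $$|X| \le \bigl[\alpha^{2N} + G_U(1-\alpha)^2\bigr]\,|T|,$$ where $\alpha \in [1-\epsilon_0,1)$ is the compression ratio used at each stage: the $\alpha^{2N}$ term is the area of the fully compressed central triangle, and the $G_U(1-\alpha)^2$ term bounds the total area of the protruding ``sprouts'' summed over all scales (the geometric series in $\alpha$ converges precisely because $\alpha<1$, and each sprout's area is controlled by a single three–term ratio, hence by $G_U$). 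First I would fix $\epsilon_0$ small enough that the recursive estimate closes, i.e.\ so that at every stage the translated half still fits inside the previously constructed figure; this is where $\alpha$ must be close to $1$.

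The second half is the maximal-function lower bound. The key point, standard in Perron-tree arguments, is that for each $i$ the original (uncompressed) rectangle $R_i$ is a translate of the compressed piece $R_i' \subseteq X$ by a bounded multiple of its own length, so $R_i$ is contained in a fixed dilate of $R_i'$; since $|R_i \cap X| \ge |R_i'| $ and $R_i \in \mathcal R_U$ (it has slope $u_i \in U$), one gets $M_U \mathbf 1_X(x) \gtrsim |R_i'|/|R_i| $ for $x$ in the translated rectangle $\widehat R_i$. Choosing the compression ratio $\alpha$ so that this ratio is at least $\tfrac14$ on each $\widehat R_i$ — and noting the $\widehat R_i$ essentially tile a set of measure $\gtrsim |T|$ — yields $|\{M_U\mathbf 1_X > \tfrac14\}| \gtrsim |T|$. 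Rescaling $T$ to have measure exactly matching the normalisation then turns the inequality $|X|\le [\alpha^{2N}+G_U(1-\alpha)^2]|T|$ into the stated one.

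The main obstacle I expect is bookkeeping in the recursive area estimate: one must track, stage by stage, both the central compressed triangle and the family of sprouts created when the two half-trees are translated into overlapping position, and show the sprout areas sum to at most $G_U(1-\alpha)^2|T|$ uniformly in $N$. This requires (i) verifying that at each stage the relevant compression is governed by one of the ratios $\frac{u_{k+2l}-u_{k+l}}{u_{k+l}-u_k}$ or its reciprocal, hence bounded by $G_U$, and (ii) that after translation the protruding triangles at different nodes of the binary tree have disjoint (or boundedly overlapping) horizontal projections, so their areas add up across the $N$ levels to a convergent geometric sum in $\alpha$. A secondary technical point is the choice of $\epsilon_0$: it must be taken uniform over all $N$ and all $U\subseteq\Omega$ with $\#U=2^N$, which is fine since $\epsilon_0$ depends only on the combinatorial structure of the dyadic splitting, not on the actual values $u_i$. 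Everything else — the passage from $M_U$-lower bounds to the measure of the superlevel set, and the final rescaling — is routine.

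Once this Proposition is in hand, Theorem~\ref{T6} follows quickly: given $P_\Omega<\infty$, pick along the $\liminf$ a sequence of $N$'s and sets $U\subseteq\Omega$ with $\#U=2^N$ and $G_U \le C$ for a fixed constant $C$; apply the Proposition with $\alpha = 1 - N^{-1}$ (say), so that $\alpha^{2N}\to e^{-2}$ and $G_U(1-\alpha)^2 \le C/N^2 \to 0$, giving $|X| \le (e^{-2}+o(1))\,|\{M_U\mathbf 1_X>\tfrac14\}|$; since $M_U \le M_\Omega$ and $X$ has finite positive measure, a weak-$(p,p)$ inequality for $M_\Omega$ would force $|\{M_\Omega \mathbf 1_X > \tfrac14\}| \le 4^p\|M_\Omega\|_p^p |X|$, and letting $N\to\infty$ contradicts $4^p\|M_\Omega\|_p^p < e^{2}$ unless $\|M_\Omega\|_p = \infty$. (Strictly, one iterates: replacing $\tfrac14$ by an arbitrary threshold and $X$ by disjoint translated copies amplifies the ratio without bound, ruling out boundedness on $L^p$ for every $p\in(1,\infty)$.)
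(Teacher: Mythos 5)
Your approach to the Proposition itself is essentially the paper's: the authors give no proof at all, stating only that the Proposition is ``a careful reading of Hare and R\"onning's work'', and the construction they have in mind is exactly the Perron tree over $U$ that you sketch --- the three-term ratios defining $G_U$ control the area of the sprouts created at each bisection stage, and the axis-shifted copies of the rectangles give the lower bound $M_U\mathbf 1_X>\frac14$ on a set of measure comparable to $|T|$. Your plan correctly identifies where $\alpha^{2N}$, $(1-\alpha)^2$ and $G_U$ each enter, and why $\epsilon_0$ can be taken uniform in $N$ and $U$; what you present is still a plan, with the recursive bookkeeping (bounded overlap of the sprouts across the $N$ levels, and the absence of extra multiplicative constants in the stated inequality) deferred, but it is the right plan and matches the source the paper relies on.

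One genuine error, though it concerns your deduction of Theorem~\ref{T6} rather than the Proposition: the choice $\alpha=1-N^{-1}$ does not work. With it, $\alpha^{2N}\to e^{-2}$, so $\alpha^{2N}+G_U(1-\alpha)^2$ stays bounded \emph{below} by a positive constant and you only obtain $\|M_U\|_p^p\gtrsim_p e^{2}\,4^{-p}$, a finite bound; the proposed amplification by ``disjoint translated copies'' does not rescue this, since the ratio $|\{M_U\mathbf 1_X>\frac14\}|/|X|$ is not improved by taking disjoint unions of far-apart translates of $X$. The correct order of quantifiers (the one the paper uses) is: first fix $\alpha<1$ so close to $1$ that $2P_\Omega(1-\alpha)^2$ is as small as desired --- possible uniformly in $k$ since $G_{U_k}<2P_\Omega$ --- and only then let $N_k\to\infty$ so that $\alpha^{2N_k}\to 0$, making the lower bound for $\|M_{\Omega}\|_p^p$ arbitrarily large.
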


Hence for any $0<\alpha<1$ close enough to $1$, this gives us the following lower bound for any $p \in (1,\infty)$: $$\|M_U \|_p^p \gtrsim_p \frac{1}{ \alpha^{2N} + G_U(1-\alpha)^2},$$
where $\gtrsim_p$ means that the inequality holds up to a multiplicative constant that does only depend on $p$.

Indeed, we have $$ \| M_U \mathbf{1}_X \|_p^p \geq \frac{1}{4^p} \left| \{ M_U \mathbf{1}_X > \frac{1}{4} \} \right|  \gtrsim_p \frac{\left| X \right|}{ \alpha^{2N} + G_U(1-\alpha)^2 }.$$ This allows us now to easily conclude the proof of Theorem~\ref{T6}. Indeed, fix an arbitrary set $\Omega \subset \mathbb{R}$ and suppose that its Perron capacity is finite \textit{i.e.} assume that one has: $$P_\Omega < \infty.$$ By definition of $P_\Omega$, there thus exists a strictly increasing sequence of integers $\left\{ N_k: k \in \mathbb{N}^* \right\}$ such that for any $k$ there is a set $U_k \subset \Omega$ such that $G(U_k) < 2P_\Omega$ and $\#U_k = 2^{N_k}$. Since there holds $U_k \subset \Omega$, we obtain, for any $0<\alpha<1$ sufficiently close to $1$ and any $k \geq 1$: $$\|M_{ \Omega } \|_p^p \geq \| M_{U_k} \|_p^p \gtrsim \frac{1}{\alpha^{2N_k} + 2P_\Omega(1-\alpha)^2}.$$ Since this holds for any $ k \geq 1$ and any $\alpha$ close to $1$, this implies that we have $\|M_\Omega\|_p = \infty$.

\section{Homogeneous sets}

We fix an arbitrary set $U$ in $\mathbb{R}^*_+$ whose cardinal is $2^N$ for some $N \in \mathbb{N}^*$. The following proposition shows that it is equivalent for $U$ to be uniformly distributed and its Perron factor to equal $2$.

\begin{Proposition}
We have $G_U = 2$ if and only if the elements of $U$ are in arithmetic progression \emph{i.e.} $U$ is of the form $$U = \{ a + k\delta : 1 \leq k \leq 2^{N} \} $$ for some $a \in \mathbb{R}$ and $\delta > 0$.
\end{Proposition}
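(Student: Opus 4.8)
The statement to prove is that $G_U = 2$ if and only if $U$ is an arithmetic progression. The ``if'' direction is a direct computation: if $U = \{a + k\delta : 1 \le k \le 2^N\}$, then for any admissible $k, l$ with $k + 2l \le 2^N$ one has $u_{k+2l} - u_{k+l} = l\delta = u_{k+l} - u_k$, so every term in the supremum defining $G_U$ equals $1 + 1 = 2$; hence $G_U = 2$. (One should also note that $G_U \ge 2$ always, since $t + 1/t \ge 2$ for $t > 0$, which is why the lower bound $2$ in the definition of $P_\Omega$ appears; this is worth recalling at the start.)

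\textbf{The ``only if'' direction.} Here I would argue that $G_U = 2$ forces, for \emph{every} admissible pair $(k, l)$, the equality
\[
\frac{u_{k+2l} - u_{k+l}}{u_{k+l} - u_k} + \frac{u_{k+l} - u_k}{u_{k+2l} - u_{k+l}} = 2,
\]
because each summand is a positive real whose value plus its reciprocal is at least $2$, with equality exactly when the quantity is $1$. Thus $u_{k+2l} - u_{k+l} = u_{k+l} - u_k$ for all $k, l \ge 1$ with $k + 2l \le 2^N$. Writing $d_j := u_{j+1} - u_j > 0$ for $1 \le j \le 2^N - 1$ for the consecutive gaps, the case $l = 1$ already gives $d_{k} = d_{k+1}$ for every $k$ with $k + 2 \le 2^N$, i.e. for $1 \le k \le 2^N - 2$; this chains together to give $d_1 = d_2 = \cdots = d_{2^N - 1}$, so all gaps are equal to a common value $\delta > 0$, which means $U$ is the arithmetic progression $\{u_1 + (k-1)\delta : 1 \le k \le 2^N\}$ — of the claimed form with $a = u_1 - \delta$.

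\textbf{Main obstacle.} There is essentially no deep obstacle; the only thing to be careful about is the bookkeeping on the index ranges. One must check that the case $l = 1$ is actually \emph{available} in the supremum, i.e. that $N \ge 1$ guarantees pairs $(k,1)$ with $k + 2 \le 2^N$ exist — this needs $2^N \ge 3$, which holds as soon as $N \ge 2$; for $N = 1$ one has $|U| = 2$, the supremum in $G_U$ is over an empty index set and any two-point set is trivially an arithmetic progression, so that degenerate case should be dispatched separately (or the convention $\sup \emptyset$ clarified). Modulo this small case distinction, the chaining argument $d_1 = d_2 = \cdots$ from the $l=1$ instances alone suffices, and the larger $l$ instances are then automatically consistent, so no further work is needed.
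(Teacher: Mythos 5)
Your proposal is correct and follows essentially the same route as the paper: the ``if'' direction by direct computation, and the ``only if'' direction by noting that $x+\tfrac1x\ge 2$ with equality iff $x=1$ and then chaining the $l=1$ instances to equalize all consecutive gaps. Your extra remarks on index ranges and the degenerate case $N=1$ are a small (and reasonable) refinement of the paper's argument, not a different method.
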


\begin{proof}
If $U$ is in arithmetic progression we have easily $G_U = 2$. On the other hand if we have $G_U = 2$ since $x + \frac{1}{x} = 2$ if and only $x = 1$ we have for any $k$ (taking $\ell = 1$) $$u_{k + 2} - u_{k +1} = u_{k+1} - u_k$$ which concludes the proof.
\end{proof}

We will be particularly interested by \textit{homogeneous sets} that is to say sets $H$ of the form $$H := H_{a,N} = \{ ka :1 \leq  k \leq 2^N \}$$ for some integers $a \in \mathbb{N}^*$ and $N\in\N$. In particular, we wish to perturb a little an homogeneous set $H_{a,N}$ into a set $H'$ such that the Perron's constant of $H'$ is still bounded bounded. More precisely, fix any $a,N\in \mathbb{N}^*$ and let $\epsilon$ be an arbitrary function $$ \epsilon : H_{a,N} \rightarrow (0,1).$$ Define then the set $H_{a,N}(\epsilon)$ as $$H_{a,N}(\epsilon) := \left\{ (1+\epsilon(x)) x : x \in H_{a,N} \right\}.$$

\begin{figure}[h!]
\centering
\includegraphics[scale=1]{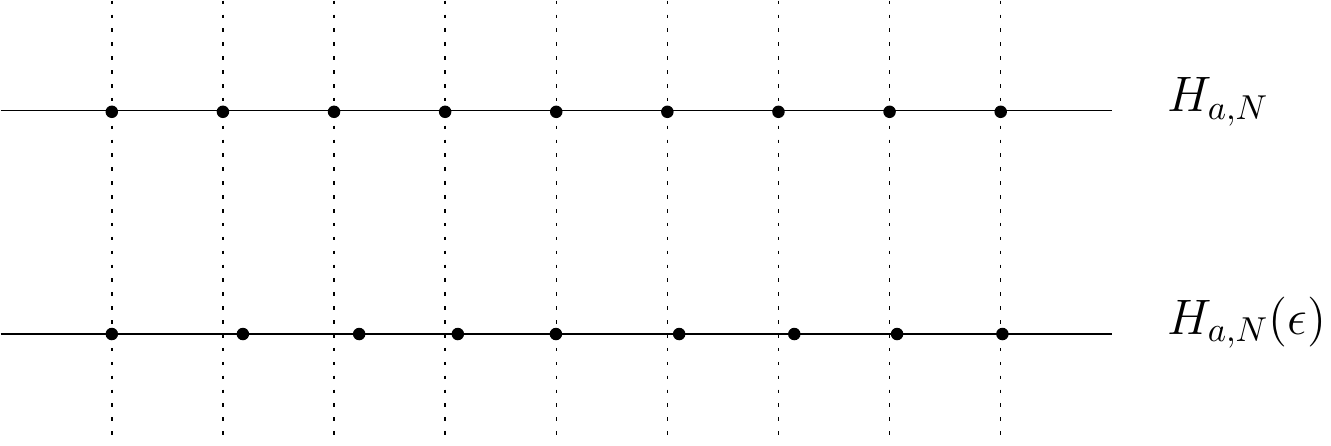}
\caption{A representation of a uniformly distributed set and its perturbation.}  
\end{figure}

\begin{Proposition}\label{P2}
Assume that $a\in\R^*_+$, $N\in\N^*$ and $\epsilon:H_{a,N}\to (0,1)$ are given.
If one has $$2^N\|\epsilon\|_\infty \leq\frac 12 ,$$ then we have $G_{H_{a,N}} \leq 6$.
\end{Proposition}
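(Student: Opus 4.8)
The plan is to show that under the hypothesis $2^N\|\epsilon\|_\infty \le \tfrac12$, every ratio appearing in the supremum defining $G_{H_{a,N}(\epsilon)}$ stays close to the corresponding ratio for the unperturbed homogeneous set $H_{a,N}$, which is an arithmetic progression and hence has Perron factor exactly $2$. Concretely, write the elements of $H_{a,N}$ as $x_j = ja$ for $1\le j\le 2^N$, and set $y_j := (1+\epsilon(x_j))x_j$ for the perturbed set, noting that the perturbation is small enough that the $y_j$ remain in the same increasing order as the $x_j$ (this uses $\|\epsilon\|_\infty<1/2^{N+1}$, so consecutive gaps $x_{j+1}-x_j = a$ dominate the total distortion). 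I would first record the elementary bound $|y_j - x_j| = \epsilon(x_j)x_j \le \|\epsilon\|_\infty \cdot 2^N a \le \tfrac12 a$ for every $j$.

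Next I would examine a single term $\dfrac{u_{k+2l}-u_{k+l}}{u_{k+l}-u_k} + \dfrac{u_{k+l}-u_k}{u_{k+2l}-u_{k+l}}$ with $u = y$. For the homogeneous set the two relevant gaps $x_{k+2l}-x_{k+l}$ and $x_{k+l}-x_k$ are both equal to $la \ge a$. After perturbation, the numerator gap $y_{k+2l}-y_{k+l}$ differs from $la$ by at most $|y_{k+2l}-x_{k+2l}| + |y_{k+l}-x_{k+l}| \le \tfrac12 a + \tfrac12 a = a$, and similarly for the denominator gap. So each perturbed gap $G'$ satisfies $la - a \le G' \le la + a$; when $l \ge 2$ this gives $\tfrac12 la \le G' \le \tfrac32 la$, and when $l=1$ a slightly more careful count (the actual distortion of a gap $x_{j+1}-x_j=a$ is at most $2^N\|\epsilon\|_\infty \cdot (x_{j+1}+x_j)/\ldots$, but crudely $\le a$, which would be useless) forces me to sharpen the estimate: I would instead bound the distortion of the gap $x_{i}-x_j$ (with $i>j$) by $\|\epsilon\|_\infty(x_i + x_j) \le 2\|\epsilon\|_\infty 2^N a \le a$, and then observe that since both gaps in a single term equal $la$ and differ from $la$ by at most $a$ each, the ratio of the two perturbed gaps lies in $[\tfrac{la-a}{la+a}, \tfrac{la+a}{la-a}] \subseteq [\tfrac{1}{3}, 3]$ once $l\ge 1$ — wait, for $l=1$ this is $[0,\infty)$, so this crude approach fails for $l=1$ and I must use that the $l=1$ gaps are themselves perturbed by a controlled \emph{relative} amount.

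The correct sharpening, which I would carry out, is to bound the \emph{relative} error of each gap: $\big|(y_{i}-y_j) - (x_i - x_j)\big| \le \epsilon(x_i)x_i + \epsilon(x_j)x_j \le \|\epsilon\|_\infty(x_i+x_j) \le 2^{N+1}\|\epsilon\|_\infty \cdot \tfrac{x_i - x_j}{\,x_i-x_j\,}\cdot\!\!$ — more usefully, since $x_i - x_j = (i-j)a$ and $x_i + x_j \le 2^{N+1}a \le \dfrac{2^{N+1}}{i-j}(x_i-x_j)$, combined with $2^N\|\epsilon\|_\infty \le \tfrac12$ one gets $\big|(y_i-y_j)-(x_i-x_j)\big| \le \tfrac{1}{i-j}\cdot\tfrac12\cdot(x_i - x_j)\cdot\tfrac{2}{1} \le (x_i - x_j)$ — still not a multiplicative factor below $1$ when $i-j$ is small. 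Because of this, for the two gaps in a Perron term, which have $i-j$ equal to $l$ in both cases, the relative error is at most $\tfrac{2^{N+1}\|\epsilon\|_\infty}{l} \le \tfrac{1}{l} \le 1$; for $l \ge 2$ this is $\le \tfrac12$, giving each perturbed gap in $[\tfrac12, \tfrac32]$ times $la$, hence each of the two summands in $[\tfrac13, 3]$ and the term is $\le 6$. For $l = 1$ I would handle the three-point configuration $x_k, x_{k+1}, x_{k+2}$ directly: here the relevant quantity is $\dfrac{y_{k+2}-y_{k+1}}{y_{k+1}-y_k}+\dfrac{y_{k+1}-y_k}{y_{k+2}-y_{k+1}}$ with $y_{k+1}-y_k = a + (\epsilon(x_{k+1})x_{k+1} - \epsilon(x_k)x_k)$ and the correction bounded in absolute value by $\|\epsilon\|_\infty(x_{k+1}+x_k) \le 2^{N+1}\|\epsilon\|_\infty a \le a$, which again is borderline, so the honest fix is to strengthen the hypothesis bookkeeping: the statement's constant $6$ and hypothesis $2^N\|\epsilon\|_\infty\le\tfrac12$ should combine so that $x_{k+1}+x_k \le 2^{N+1}a$ and $\|\epsilon\|_\infty \le 2^{-N-1}$ give correction $\le a\cdot\tfrac{x_{k+1}+x_k}{2^N a}\cdot\tfrac12 \le a$; to truly get gaps bounded away from $0$ one uses that they are $\ge a - \tfrac{a}{2}\cdot(\text{something}<1)$, and I would track constants so the worst perturbed gap is $\ge \tfrac{a}{2}$ and the largest is $\le \tfrac{3a}{2}$, yielding each summand $\le 3$ and the total $\le 6$.

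I expect the main obstacle to be exactly this uniform control of the $l=1$ (consecutive-triple) terms, since there the unperturbed gaps are smallest ($=a$) relative to the absolute size of the perturbation, and one must exploit that the \emph{total} perturbation across all $2^N$ points is bounded by $\tfrac{a}{2}$, so no individual gap can shrink below about $\tfrac{a}{2}$ or grow beyond about $\tfrac{3a}{2}$. Once that is pinned down, summing the two reciprocal ratios each lying in $[\tfrac13,3]$ gives the bound $G_{H_{a,N}(\epsilon)} \le 6$ claimed (I note the statement as printed writes $G_{H_{a,N}}$, evidently a typo for $G_{H_{a,N}(\epsilon)}$, and the proof I outline gives that). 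The rest — monotonicity of the $y_j$, and the trivial direction that an arithmetic progression has $G = 2$ — is routine and follows from the gap estimates already in hand.
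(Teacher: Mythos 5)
Your write-up correctly isolates the crux (the $l=1$, consecutive-triple terms) and even guesses the right numerical endgame (perturbed gaps trapped between $\tfrac a2$ and $la+\tfrac a2$, each summand at most $3$, total at most $6$), but the estimate you actually establish is too weak to deliver it, and the step that would close the gap is asserted rather than proved. Writing $y_j=(1+\epsilon(x_j))x_j$, you bound the distortion of a gap by the triangle inequality, $|(y_i-y_j)-(x_i-x_j)|\le \epsilon(x_i)x_i+\epsilon(x_j)x_j\le 2\cdot 2^N\|\epsilon\|_\infty\, a\le a$; as you yourself note, for $l=1$ this only gives $y_{k+1}-y_k\ge 0$, which neither shows the $y_j$ are still increasing nor bounds the ratios. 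Your proposed repair (``track constants so the worst perturbed gap is $\ge a/2$'') does not follow from anything you have set up: under a genuinely two-sided perturbation with each point displaced by at most $a/2$, a consecutive gap really can collapse to $0$ (push $x_k$ up by $a/2$ and $x_{k+1}$ down by $a/2$), so no bookkeeping of constants rescues the symmetric bound.

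The missing ingredient is the positivity of $\epsilon$: the perturbation is one-sided, every point is pushed \emph{up}, each by at most $\|\epsilon\|_\infty\,2^N a\le \tfrac a2$. Hence when bounding a gap from below you may simply discard the nonnegative displacement of the upper endpoint, obtaining $y_{k+l}-y_k\ge la-\epsilon(x_k)x_k\ge la-\tfrac a2\ge\tfrac a2$ for every $l\ge 1$; and when bounding from above you discard the displacement of the lower endpoint, obtaining $y_{k+2l}-y_{k+l}\le la+\|\epsilon\|_\infty\,2^N a\le la+\tfrac a2$. This is exactly what the paper's proof does: it writes $h_{k+1}-h_k\ge a-ak\epsilon(ka)$ by dropping the term $\epsilon[(k+1)a](k+1)a\ge 0$, and then bounds each ratio by $\frac{la+2^Na\|\epsilon\|_\infty}{la-ka\|\epsilon\|_\infty}\le\frac{1+2^N\|\epsilon\|_\infty}{1-(2^N-2)\|\epsilon\|_\infty}\le 3$ using $k/l\le 2^N-2$, which treats all $l\ge1$ uniformly with no case split. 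Once you insert the observation that $\epsilon>0$, your outline goes through; without it, the $l=1$ case --- which you correctly identify as the whole difficulty --- remains unproved.
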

\begin{proof}
To simplify the notations, let for $1\leq k\leq 2^N$, $h_k:=[1+\epsilon(ka)]ka$.
First, observe that for $1\leq k\leq 2^N-1$, one has:
\begin{multline*}
h_{k+1}-h_k=\{1+\epsilon[(k+1)a]\}(k+1)a-[1+\epsilon(ka)]ka\\
\geq a-ak\epsilon(ka)=a[1-k\epsilon(ka)]\geq a[1-(2^N-1)\|\epsilon\|_\infty]>0,
\end{multline*}
so that there holds $h_1<h_2<\cdots <h_{2^N}$.

Now compute, for $1\leq l\leq k$ such that $k+2l\leq 2^N$ (implying in particular that one has $\frac kl \leq 2^N-2$):
$$
\frac{h_{k+2l}-h_{k+l}}{h_{k+l}-h_k}\leq \frac{la+2^Na\|\epsilon\|_\infty}{la-\|\epsilon\|_\infty ka}=\frac{1+\frac{2^N}{l}\|\epsilon\|_\infty}{1-\frac kl \|\epsilon\|_\infty}\leq \frac{1+2^N\|\epsilon\|_\infty}{1-(2^N-2)\|\epsilon\|_\infty}\leq 3.
$$ One obtains a similar inequality for the symmetric ratio.
\end{proof}

\section{Proof of Theorem \ref{T1}}

Recall that we have defined $$\Omega_{ \boldsymbol{e}} := \left\{ \frac{n}{\cos n}:n \in \mathbb{N}^* \right\}$$ and that we wish to prove that the Perron capacity of $\Omega_{ \boldsymbol{e}}$ is finite \textit{i.e.} that we have $P(\Omega_{ \boldsymbol{e}}) < \infty$. To do so, we are simply going to prove that the set $\Omega_{ \boldsymbol{e}}$ contains ``small perturbations'' of \emph{arbitrarily long homogeneous sets.} Specifically, for any $N \in \mathbb{N}$, we consider the set $$E(N) := \{ n \in \mathbb{N}^* : \exists m \in \mathbb{Z}, |n + 2\pi m| < 2^{-N } \}.$$ To begin with, we claim the following.

\begin{Claim}
For any $N\in\N$ we have $\#E(N) = \infty$.
\end{Claim}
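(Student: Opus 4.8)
The plan is to observe that $E(N)$ is nothing but the set of positive integers $n$ which are extremely well approximated by integer multiples of $2\pi$, and that this is a classical statement about the irrational rotation by $\theta:=1/(2\pi)$. Writing $\|x\|$ for the distance from $x\in\R$ to the nearest integer, I would first note that $n\in E(N)$ if and only if $\|n\theta\|<2^{-N}/(2\pi)$: if $m\in\Z$ realises $|n/(2\pi)+m|=\|n\theta\|$, then $|n+2\pi m|=2\pi\|n\theta\|<2^{-N}$, and conversely any $m$ with $|n+2\pi m|<2^{-N}$ gives $\|n\theta\|\leq|n/(2\pi)+m|<2^{-N}/(2\pi)$. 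So, setting $\delta:=2^{-N}/(2\pi)\in(0,1)$, it suffices to prove that $\{\,n\in\N^*:\|n\theta\|<\delta\,\}$ is infinite.

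To prove this I would argue directly, without invoking equidistribution. Since $\pi$ is irrational, so is $\theta=1/(2\pi)$, hence the fractional parts $\{n\theta\}$, $n\in\N^*$, are pairwise distinct (an equality $\{n\theta\}=\{n'\theta\}$ with $n\neq n'$ forces $\theta\in\Q$). Thus $\{\,\{n\theta\}:n\in\N^*\,\}$ is an infinite subset of the compact interval $[0,1]$, so it admits an accumulation point $y$; consequently the interval $(y-\delta/3,\,y+\delta/3)$ contains $\{n\theta\}$ for infinitely many values of $n$, say for $n_1<n_2<\cdots$. For every $j\geq 2$ the integer $n_j-n_1$ is then positive, these integers are pairwise distinct, and since $(n_j-n_1)\theta\equiv\{n_j\theta\}-\{n_1\theta\}\pmod 1$ we obtain
\[
\|(n_j-n_1)\theta\|=\|\{n_j\theta\}-\{n_1\theta\}\|\leq|\{n_j\theta\}-\{n_1\theta\}|<\tfrac{2\delta}{3}<\delta .
\]
Hence $\{\,n_j-n_1:j\geq 2\,\}$ is an infinite subset of $\{\,n\in\N^*:\|n\theta\|<\delta\,\}\subseteq E(N)$, which proves the Claim.

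I do not expect a genuine obstacle here: the heart of the matter is the standard fact that the orbit of $0$ under an irrational rotation returns to every neighbourhood of $0$ infinitely often, which one could equally derive from Dirichlet's approximation theorem or from Weyl's equidistribution theorem. The only points deserving a line of care are the elementary translation of the condition defining $E(N)$ into the statement $\|n\theta\|<2^{-N}/(2\pi)$, and the passage from the mere existence of an accumulation point to infinitely many admissible $n$, which the difference trick above handles. This Claim is the first ingredient for Theorem~\ref{T1}: for $n\in E(N)$ one has $\cos n$ close to $\cos(2\pi m)=1$, so $\cos n/n$ is a small multiplicative perturbation of $1/n$, which is exactly what is needed to exhibit, inside $\Omega_{\boldsymbol e}$, small perturbations of arbitrarily long homogeneous sets and then to invoke Proposition~\ref{P2}.
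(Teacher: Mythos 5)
Your proof is correct and rests on exactly the fact the paper invokes --- the density near $0$ of $G'=\{n+2\pi m:n\in\N,\ m\in\Z\}$, equivalently the recurrence of the irrational rotation by $1/(2\pi)$ --- except that the paper merely cites this density with a footnote reference, whereas you prove it from scratch via Bolzano--Weierstrass and the difference trick. Your version is fully self-contained and also makes explicit the small point the paper leaves implicit, namely that one obtains infinitely many \emph{distinct positive} integers $n$ in $E(N)$ rather than just infinitely many points of $G'$ close to $0$.
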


\begin{proof}[Proof of the claim]
The claim follows from the fact that $G' = \left\{ n + 2\pi m : n\in \mathbb{N}, m \in \mathbb{Z} \right\}$ is dense in $\mathbb{R}$, which can be shown easily using standard techniques from basic real analysis\footnote{See \emph{e.g.} https://lefevre.perso.math.cnrs.fr/PagesPerso/enseignement/Archives/SSgrpesAdd.pdf {P.~Lef\`evre's webpage} for a proof.}.\end{proof}
\begin{Claim}\label{claim2}
For any $n\in\N$ and any $n\in E(N)$, one has:
$$
1<\frac{1}{\cos n}\leq 1+2^{-2N}.
$$
\end{Claim}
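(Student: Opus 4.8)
The plan is to reduce the claim to two elementary estimates on the cosine of a small real number. Given $n\in E(N)$, fix $m\in\Z$ with $|n+2\pi m|<2^{-N}$ and put $\theta:=n+2\pi m$; by $2\pi$-periodicity of the cosine one has $\cos n=\cos\theta$, so it suffices to control $\cos\theta$ for $|\theta|<2^{-N}$.

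First I would note that $\theta\neq 0$: were $\theta=0$, the integer $n\geq 1$ would equal $-2\pi m$, which is $0$ when $m=0$ and irrational when $m\neq 0$ — impossible. Since $0<|\theta|<2^{-N}\leq 1<\pi/2$ and cosine is strictly decreasing on $[0,\pi/2]$, we get $0<\cos\theta<1$. In particular $\cos n=\cos\theta\in(0,1)$, so $\frac{1}{\cos n}$ is well defined and $\frac{1}{\cos n}>1$, which is the left-hand inequality.

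For the right-hand inequality I would invoke the bound $\cos\theta\geq 1-\frac{\theta^2}{2}$, valid for all real $\theta$ (it follows from $\sin t\leq t$ for $t\geq 0$ together with parity). As $\theta^2<2^{-2N}$, this yields $\cos\theta>1-2^{-2N-1}>0$, hence $\frac{1}{\cos n}=\frac{1}{\cos\theta}<\frac{1}{1-2^{-2N-1}}$. It then remains to verify the numerical inequality $\frac{1}{1-2^{-2N-1}}\leq 1+2^{-2N}$; clearing the positive denominator, this is equivalent to $(1+2^{-2N})(1-2^{-2N-1})\geq 1$, which expands to $1+2^{-2N-1}-2^{-4N-1}\geq 1$, i.e.\ to $2N\geq 0$ — true. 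Combining the two estimates gives $1<\frac{1}{\cos n}\leq 1+2^{-2N}$.

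There is no genuine obstacle here; the only points requiring care are keeping $|\theta|$ comfortably below $\pi/2$ so that the monotonicity and positivity statements apply, and the exponent bookkeeping in the final numerical step.
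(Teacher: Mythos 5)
Your proof is correct and follows essentially the same route as the paper: reduce to $\theta=n+2\pi m$ with $|\theta|<2^{-N}$, apply $\cos\theta\geq 1-\theta^2/2$, and then bound $\frac{1}{1-x}$ from above (the paper cites $\frac{1}{1-x}\leq 1+2x$ for $0\leq x\leq\frac12$ where you verify the equivalent numerical inequality directly). Your extra observation that $\theta\neq 0$, justifying the strict lower bound $1<\frac{1}{\cos n}$, is a small point the paper leaves implicit.
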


\begin{proof}[Proof of the claim]
Choosing $m\in\Z$ such that one has $|n+2\pi m|<2^{-N}$ (which exists by definition since one has $n\in E(N)$), one obtains, using the inequality $1-\cos x\leq \frac 12 x^2$ valid for any $x\in\R$:
$$
1-\cos n=1-\cos (n+2\pi m)\leq\frac 12 (n+2\pi m)^2\leq \frac 12 2^{-2N}.
$$
Note that, in particular, this yields $0<\cos n<1$.

Using the fact that one has $\frac{1}{1-x}\leq 1+2x$ for any $0\leq x\leq \frac 12$, one finally computes:
$$
\frac{1}{\cos n}=\frac{1}{1-(1-\cos n)}\leq 1+2(1-\cos n)\leq 1+2^{-2N},
$$
as was announced.
\end{proof}

Fix an arbitrary large $N \in \mathbb{N}$ and any integer $a \in E\left(2N\right)$. We claim the following.

\begin{Claim}
For any $a \in E(2N)$, we have $$H_{a,N} \subset E(N).$$
\end{Claim}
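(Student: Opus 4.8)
The plan is to unwind the two definitions directly; there is essentially no obstacle here, since the statement is a one‑line consequence of the fact that $H_{a,N}$ consists of integer \emph{multiples} of $a$ by factors bounded by $2^N$.

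First I would use the hypothesis $a \in E(2N)$ to fix an integer $m_0 \in \Z$ with $|a + 2\pi m_0| < 2^{-2N}$. Then, given an arbitrary element $ka \in H_{a,N}$, so that $1 \leq k \leq 2^N$, I would set $m := k m_0 \in \Z$ and estimate
$$
|ka + 2\pi m| = |ka + 2\pi k m_0| = k\,|a + 2\pi m_0| < 2^N \cdot 2^{-2N} = 2^{-N}.
$$
Since $ka \in \N^*$, this witnesses $ka \in E(N)$ by definition. As $ka$ was an arbitrary element of $H_{a,N}$, we conclude $H_{a,N} \subset E(N)$. The only ``step'' is choosing the scaled integer $m = k m_0$ and multiplying the bound by $k \leq 2^N$, which is exactly why the index $2N$ was chosen in $E(2N)$: it provides one factor of $2^{-N}$ to absorb the dilation and leaves the required $2^{-N}$ margin. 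There is no genuine difficulty; the main point to get right is simply to keep track that the exponent halves (from $2N$ to $N$) precisely because $k$ ranges up to $2^N$.
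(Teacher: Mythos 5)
Your proof is correct and is essentially identical to the paper's: both fix $m_0$ with $|a+2\pi m_0|<2^{-2N}$ and scale it to $m=km_0$, using $k\leq 2^N$ to absorb one factor of $2^{-N}$. Nothing further to add.
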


\begin{proof}
Fix $a \in E(2N)$ ; by definition we have $m \in \mathbb{Z}$ such that $$\left| a +2\pi m\right| < 2^{-2N}.$$ Hence for any $1 \leq k \leq 2^N$ we have $$\left| ka +2\pi km \right| < k2^{-2N} \leq 2^{-N}$$ \textit{i.e.} 
for any $1 \leq k \leq 2^N$ we have $ka \in E(N)$. 
\end{proof}

Fix now $N\in\N^*$ and $a\in E(2N)$ and define $\epsilon:H_{a,N}\to (0,1)$ by the formula $1+\epsilon(n)=\frac{1}{\cos n}$ for $n\in H_{a,N}$~---~we hence see the set:
$$
\left\{\frac{n}{\cos n}:n\in H_{a,N}\right\}=\left\{[1+\epsilon(n)] n:n\in H_{a,N}\right\},
$$
as a perturbation of the above type of the set $H_{a,N}$~---~. We compute, using Claim~\ref{claim2}:
$$
2^N\|\epsilon\|_\infty\leq 2^{-N}\leq\frac 12 ,
$$
so that Proposition~\ref{P2} yields $G_{H_{a,N}(\epsilon)}\leq 6$.

Finally, observe that we have by construction the inclusion $$H_{a,N}(\epsilon) \subset \Omega.$$ Since this holds for any $N \in\N^*$, it follows that we have $P_\Omega \leq 6$ as desired.

\qed

 \begin{Remark} The same conclusion holds if in Theorem \ref{T1} we replace $\Omega_{ \boldsymbol{e}} := \left\{ \frac{\cos n}{n} : n \in \mathbb{N}^* \right\}$ with 
 $$\Omega_{ \boldsymbol{s}} := \left\{ \frac{\sin n}{n} : n \in \mathbb{N}^* \right\}.$$ 
 As an application of Bateman's Theorem, we can hence say that also the set $\Omega_{ \boldsymbol{s}}$ is not finitely lacunary. The argument is the same but, in this case, we approximate $\frac{\pi}{2}$ by elements of  $G'$ and take $1 + \epsilon(x) =  \frac{1}{\sin x}$.

\end{Remark}

\subsection*{Acknowledgements.} This research has been supported by the \emph{\'Ecole Normale Sup\'erieure}, Paris, by the \emph{``Gruppo Nazionale per l'Analisi Matematica, la Probabilità e le loro Applicazioni dell'Is\-tituto Nazionale di Alta Matematica F. Severi''} and by the project \emph{Vain-Hopes} within the program \emph{Valere} of \emph{Università degli Studi della Campania ``Luigi Vanvitelli''}. It has also been partially accomplished within the \emph{UMI} Group \emph{TAA ``Approximation Theory and Applications''}.\\
E.~D'Aniello would like to thank the \emph{\'Ecole Normale Sup\'erieure}, Paris, for its warm hospitality in March-April, 2022; L.~Moonens would like to thank the \emph{Dipartimento di Matematica e Fisica} of the \emph{Universit\'a degli Studi della Campania ``Luigi Vanvitelli''} for its warm hospitality in April-May, 2022.

\bibliographystyle{plain}
\bibliography{DGM1}

\vspace{0.3cm}
\noindent
\small{\textsc{Emma D'Aniello}}\\[0.1cm]
\small{\textsc{Dipartimento di Matematica e Fisica},}
\small{\textsc{Universit\`a degli Studi della Campania ``Luigi Vanvitelli''},}
\small{\textsc{Viale Lincoln n. 5, 81100 Caserta,}}
\small{\textsc{Italia}}\\
\footnotesize{\texttt{emma.daniello@unicampania.it}.}

\vspace{0.3cm}
\noindent
\small{\textsc{Anthony Gauvan}}\\[0.1cm]
\small{\textsc{Laboratoire de Math\'ematiques d'Orsay, Universit\'e Paris-Sud, CNRS UMR8628, Universit\'e Paris-Saclay},}
\small{\textsc{B\^atiment 307},}
\small{\textsc{F-91405 Orsay Cedex},}
\small{\textsc{Fran\-ce}}\\
\footnotesize{\texttt{anthony.gauvan@universite-paris-saclay.fr}.}

\vspace{0.3cm}
\noindent
\small{\textsc{Laurent Moonens}}\\[0.1cm]
\small{\textsc{Laboratoire de Math\'ematiques d'Orsay, Universit\'e Paris-Saclay, CNRS UMR 8628},}
\small{\textsc{B\^atiment 307},}
\small{\textsc{F-91405 Orsay Cedex},}
\small{\textsc{Fran\-ce}}\\
\footnotesize{\texttt{laurent.moonens@universite-paris-saclay.fr}.}

\noindent\small{\textsc{\'Ecole Normale Sup\'erieure-PSL University, CNRS UMR 8553},}
\small{\textsc{45, rue d'Ulm},}
\small{\textsc{F-75230 Parix Cedex 3},}
\small{\textsc{Fran\-ce}}\\
\footnotesize{\texttt{laurent.moonens@ens.fr}.}

\end{document}